\newtheorem{theorem}{Theorem}
\newtheorem{lemma}{Lemma}
\newtheorem{proposition}{Proposition}
\journal{}
\newtheorem{remark}{Remark}[section]
\begin{document}

\begin{frontmatter}
\title{Weak convergence of marked point processes\\ generated by crossings of 
multivariate jump processes. Applications to neural network modeling.} 

\author[tambo]{M.Tamborrino\corref{cor1}}
\ead{mt@math.ku.dk}
\address[tambo]{Department of Mathematical Sciences, Copenhagen University, Universitetsparken 5, Copenhagen, Denmark }

\author[sac1]{L.Sacerdote}
\ead{laura.sacerdote@unito.it}
\address[sac1]{Department of Mathematics \lq\lq G. Peano\rq\rq, University of Turin, V. Carlo Alberto 10, Turin, Italy}
 
\author[tambo]{M. Jacobsen}
\ead{martin@math.ku.dk}

\cortext[cor1]{Corresponding author. Phone: +45 35320785. Fax: +45 35320704.}

\begin{abstract}
We consider the multivariate point process determined by the crossing times of the components of a multivariate jump process through a multivariate boundary, assuming to reset each component to an initial value after its boundary crossing. We prove that this point process converges weakly to the point process determined by the crossing times of the limit process. This holds for both diffusion and deterministic limit processes. The almost sure convergence of the first passage times under the almost sure convergence of the processes is also proved. The particular case of a multivariate Stein process converging to a multivariate Ornstein-Uhlenbeck process is  discussed as a guideline for applying  diffusion limits for jump processes. We apply our theoretical findings to neural network modeling. The proposed model gives a mathematical foundation to the generalization of the class of Leaky Integrate-and-Fire models for single neural dynamics to the case of a firing network of  neurons. This will help future study of dependent spike trains.
\end{abstract}

\begin{keyword}
Diffusion limit\sep First passage time \sep Multivariate diffusion process \sep Weak and strong convergence \sep Neural network \sep Kurtz approximation
\end{keyword}
\end{frontmatter}

\section{Introduction}\label{intro}
Limit theorems for weak convergence of probability measures and stochastic jump processes with frequent jumps of small amplitudes have been widely investigated in the literature, both for univariate and multivariate processes. Besides the pure mathematical interest, the main reason is that these theorems allow to switch from discontinuous to continuous processes, improving their mathematical tractability. Depending on the assumptions on the frequency and size of the jumps, the limit object can be either deterministic, obtained e.g. as solution of systems of ordinary/partial differential equations \cite{Kurtz1,Kurtz2,PTW,RTW}, or stochastic \citep{BilConv,Jacod,Skorohod1,Ricciardi}. Limit theorems of the first type are usually called the fluid limit, thermodynamic limit or hydrodynamic limit, and give rise to what is called Kurtz approximation \cite{Kurtz2}, see e.g. \cite{DarlingRev} for a review. In this paper we consider limit theorems of the second type, which we refer to as diffusion limits, since they yield diffusion processes. Some well known univariate examples are the Wiener, the Ornstein-Uhlenbeck (OU) and the Cox–-Ingersoll-–Ross (also known as square-root) processes, which can be obtained as diffusion limits of random walk \cite{BilConv}, Stein \cite{Lansky} and branching processes \cite{Feller}, respectively. A special case of weak convergence of multivariate jump processes is considered in Section \ref{Section2}, as a guideline for applying the method proposed in \cite{Jacod}, based on convergence of triplets of characteristics.

In several applications, e.g. engineering \citep{eng}, finance \citep{applbook,ZhangCo}, neuroscience \citep{Coombes,ReviewSac}, physics \citep{physFPT,bookFPT} and reliability theory \citep{rel,PieperDom}, the stochastic process  evolves in presence of a boundary, and it is of paramount interest to detect the so-called first-passage-time (FPT) of the process, i.e. the epoch when the process crosses a boundary level for the first time. A natural question arises: how does the FPT of the jump process relate to the FPT of its limit process? The answer is not trivial, since the FPT is not a continuous functional of the process and therefore the continuous mapping theorem cannot be applied. \\
There exist different techniques for proving the weak convergence of the FPTs of univariate processes, see e.g. \citep{Lansky,KallianpurFPT}. The extension of these results to multivariate processes requests to define the behavior of the single component after its FPT. Throughout, we assume to reset it and then restart its dynamics. This choice is suggested by application in neuroscience and reliability theory, see e.g. \cite{STZ,LLC}. The collection of FPTs coming from different components determine a multivariate point process, which we interpret as a univariate marked point process in Section 3. \newline
The primary aim of this paper is to show that the marked point process determined by the exit times of a multivariate jump process with reset converges weakly to the marked point process determined by the exit times of its limit process (cf. Section \ref{Section4}  and Section \ref{Section5} for proofs). Interestingly this result does not depend on whether the limit process is obtained through a diffusion or a Kurtz approximation. Moreover, we also prove that the almost sure convergence of the processes guarantees the almost sure convergence of their passage times.

The second aim of this paper is to provide a simple mathematical model to describe a neural network  able to reproduce dependences between spike trains, i.e. collections of a short-lasting events (spikes) in which the electrical membrane potential of a cell rapidly rises and falls. The availability of such a model can be useful in neuroscience as a tool for the study of the neural code. Indeed it is commonly believed that the neural code is encoded in the firing times of the neurons: dependences between spike trains correspond to the transmission of information from a neuron to others \cite{Perkel1, Seg}. Natural candidates as neural network models are generalization of univariate Leaky Integrate-and-Fire (LIF) models, which describe single neuron dynamics, see e.g. \cite{ReviewSac,Tuckwell1989}. These models sacrifice realism, e.g. they disregard the anatomy of the neuron, describing it as a single point, and the biophysical properties related with ion channels, for mathematical tractability  \cite{Burkitt1,Burkitt2,Gerstner}. Thought some criticisms have appeared \cite{68}, they are considered good descriptors of the neuron spiking activity \cite{67,71}. \\
In Section \ref{SectionX} we interpret our processes and theorems in the framework of neural network modeling, extending the class of LIF models from univariate to multivariate. First, the weak convergence shown in Section \ref{Section2} gives a neuronal foundation to the use of multivariate OU processes for modeling sub-threshold membrane potential dynamics of neural networks \cite{BBB,Brunel},   where dependences between neurons are determined by common synaptic inputs from the surrounding network. Second, the multivariate process with reset introduced in Section \ref{Section3} defines the firing mechanism for a neural network. Finally, the weak convergence of the univariate marked point process proved in Section \ref{Section4} guarantees that  the neural code is kept under the diffusion limit. The paper is concluded with a brief discussion and outlook on further developments and applications. 

\section{Weak convergence of multivariate Stein processes to multivariate Ornstein-Uhlnebeck}\label{Section2}
As an example for proving the weak convergence of multivariate jump processes using the method proposed in \cite{Jacod}, we show the convergence of a multivariate Stein to a multivariate OU. Mimicking the one-dimensional case \citep{Ricciardi,Lansky}  we introduce a sequence of multivariate Stein processes $\left (\bm X_n\right)_{n\geq 1}$, with $\bm{X}_{n}=\left\{ (X_{1;n},\ldots ,X_{k;n})(t); t\geq 0\right\}$ originated in the starting position $\bm x_{0;n}=(x_{01;n},\ldots, x_{0k;n})$. For each $1\leq j \leq k, n\geq 1$, the $j$th component of the multivariate Stein process, denoted by $X_{j;n}(t)$, is defined by 
\begin{eqnarray}
\nonumber X_{j;n}(t) &=&x_{0j;n}-\int_{0}^{t}\frac{X_{j;n}(s)}{\theta }\,ds+\left[
a_nN_{j;n}^{+}(t)+b_nN_{j;n}^{-}(t)\right] \\
\label{Stein} &+&\sum_{A\in \mathcal{A}}\mathbbm{1}_{\{j\in A\}}\left[
a_nM_{A;n}^{+}(t)+b_nM_{A;n}^{-}(t)\right] ,
\end{eqnarray}
where $\mathbbm{1}_A$ is the indicator function of the set $A$ and  $\mathcal{A}$ denotes the set of all subsets of $\left\{1,\ldots ,k\right\}$ consisting of at least two elements. Here  $N_{j;n}^{+}$ (intensity $\alpha _{j;n}$), $N_{j;n}^{-}$ (intensity $\beta _{j;n}$), $M_{A;n}^{+}$ (intensity $\lambda _{A;n}$) and $M_{A;n}^{-}$ (intensity $\omega _{A;n}$) for $1\leq j\leq k, A\in \mathcal{A}$, are a sequence of independent Poisson processes.   In particular, the processes $N^+_{j;n}(t)$
and $N^-_{j;n}(t)$ are typical of the $j$th component, while the processes  $M^+_{A;n}(t)$ and $M^+_{A;n}(t)$ act on a set of 
components $A \in \mathcal{A}$. Therefore, the dynamics of $X_{j;n}$ are determined by two different types of inputs. Moreover, $a_n>0$ and $b_n<0$ denote the constant amplitudes of the inputs $N_n^+, M_n^+$ and $N_n^-, M_n^-$, respectively.
\begin{remark}
The process defined by \eqref{Stein} is an example of  piecewise-deterministic Markov process  or stochastic hybrid system, i.e. a process with deterministic behavior between jumps \cite{Davis,Jacobsen}.
\end{remark}

 For each $A \in \mathcal{A}, 1 \leq j \leq k$ and  
\begin{eqnarray}  
\label{cond3}&& \alpha_{j; n}\to \infty, \qquad \beta_{j; n}\to \infty, \qquad \lambda_{A;n} \to \infty, \qquad \omega_{A;n} \to \infty, \\
\label{cond4}&& a_n \to 0,\qquad b_n \to 0,
\end{eqnarray}
we assume that the rates of the Poisson processes fulfill
\begin{eqnarray}  
\label{cond1}&&\mu_{j; n}=\alpha_{j; n} a_n + \beta_{j; n} b_n \to \mu_{j}, \qquad \mu_{A;n}=\lambda_{A;n}a_n + \omega_{A;n}b_n \to \mu_{A},\\ 
\label{cond2}&&\sigma_{j; n}^2 =
\alpha_{j; n} a_n^2 + \beta_{j; n} b_n^2 \to \sigma_j ^2,
\qquad \sigma_{A;n}^2 = \lambda_{A;n}a_n^2+\omega_{A;n}b_n^2 \to \sigma_{A}^2,
\end{eqnarray}
as $n\to \infty$. A possible parameter choice satisfying these conditions is
\begin{eqnarray*}
a_n&=& - b_n = \frac{1}{n}\\
\alpha_{j;n}&=&(\mu_j+\frac{\sigma^2_j}{2}n)n, \qquad \beta_{j;n}=\frac{\sigma^2_j}{2}n^2,\qquad 1\leq j\leq k\\
\lambda_{A;n}&=&(\mu_A+\frac{\sigma^2_A}{2}n)n, \qquad \omega_{A;n}=\frac{\sigma^2_A}{2}n^2, \qquad A\in \mathcal{A}.
\end{eqnarray*}
\begin{remark}
Jumps possess amplitudes decreasing to zero for $n\to \infty$ but occur at an increasing frequency roughly inversely proportional to the square of the jump size, following the literature for univariate diffusion limits. Thus we are not in the fluid limit setting, where the frequency are roughly inversely proportional to the jump size and the noise term is proportional to $1/\sqrt{n}$ \cite{Kurtz1}. 
\end{remark}
To prove the weak convergence of $\bm{X}_n$, we first define a new process $\bm{Z}_n=\left\{(Z_{1;n},\ldots,Z_{k;n})(t); t\geq 0\right\}$, with $j$th component given by
\begin{equation*}
Z_{j;n}(t)=-\Gamma_{j;n}t+\left[a_n N^+_{j;n}(t)+b_nN^-_{j;n}(t)\right]+\sum_{A\in \mathcal{A}}\mathbbm{1}_{\{j\in A\}} 
\left[a_nM^{+}_{A;n}(t)+b_nM^{-}_{A;n}(t)\right],
\end{equation*}
with 
\[
\Gamma _{j;n}=\mu _{j;n}+\sum_{A\in \mathcal{A}}\mathbbm{1}_{\{j\in A\}}\mu _{A;n},\qquad 1\leq j \leq k.
\]
The process $\bm Z_n$ converges weakly to a Wiener process $\bm{W}=\left\{(W_1,\ldots,W_n)(t); t\geq 0\right\}$: 
\begin{lemma}\label{lemma1}
Under conditions $(\ref{cond3}), (\ref{cond4}), (\ref{cond1}), (\ref{cond2})$, $\bm{Z}_n$ converges weakly to a  multivariate Wiener process $\bm{W}$ with mean $\bm{0}$ and  definite positive not diagonal covariance matrix $\bm\Psi$ with components
\begin{equation}\label{sigma} 
\psi_{jl}=\mathbbm{1}_{\{j=l\}}\sigma^2_j+\sum_{A\in \mathcal{A}}\mathbbm{1}_{\{j,l\in A\}}\sigma _{A}^{2}, \qquad 1\leq j,l \leq k.
\end{equation}
\end{lemma}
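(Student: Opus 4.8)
The plan is to exploit that, for each fixed $n$, $\bm{Z}_n$ is a process with stationary independent increments (a L\'evy process), so its law is governed by a deterministic characteristic triplet and the method of \cite{Jacod} reduces to checking that this triplet converges to that of the target Wiener process $\bm{W}$, whose characteristics are the null drift, continuous part $\bm\Psi\,t$ and null L\'evy measure. First I would read the triplet of $\bm{Z}_n$ off its definition: the continuous martingale part vanishes, $\bm C_n\equiv\bm 0$; the L\'evy measure is purely atomic,
\[
F_n=\sum_{j=1}^{k}\bigl(\alpha_{j;n}\,\delta_{a_n\bm e_j}+\beta_{j;n}\,\delta_{b_n\bm e_j}\bigr)
+\sum_{A\in\mathcal A}\bigl(\lambda_{A;n}\,\delta_{a_n\bm e_A}+\omega_{A;n}\,\delta_{b_n\bm e_A}\bigr),
\]
where $\bm e_j$ is the $j$th unit vector and $\bm e_A=\sum_{j\in A}\bm e_j$; and the drift is $-\bm\Gamma_n$, which by construction equals minus the component-wise mean jump rate $\int\bm x\,F_n(d\bm x)$.

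Next I would verify the three sufficient conditions of \cite{Jacod} for weak convergence of semimartingales to a continuous Gaussian limit. (i) Since all jump amplitudes $a_n,b_n\to 0$ by \eqref{cond4}, for every $\varepsilon>0$ there is an $n$ beyond which no atom of $F_n$ lies outside the ball of radius $\varepsilon$; hence the big-jumps (Lindeberg-type) condition $\int\mathbbm{1}_{\{|\bm x|>\varepsilon\}}F_n(d\bm x)\to 0$ holds trivially, and simultaneously the truncation function may be taken to act as the identity on $\operatorname{supp}F_n$. (ii) With the truncation inactive, the first characteristic equals $\bigl(-\bm\Gamma_n+\int\bm x\,F_n(d\bm x)\bigr)t$; computing $\int x_j\,F_n(d\bm x)=\mu_{j;n}+\sum_{A:\,j\in A}\mu_{A;n}=\Gamma_{j;n}$ shows it vanishes identically, matching the zero drift of $\bm{W}$ (consistent with $E[\bm Z_n(t)]=\bm 0$).

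(iii) The heart of the proof is the modified second characteristic $t\int x_jx_l\,F_n(d\bm x)$. The individual inputs $N^{\pm}_{j;n}$ contribute only on the diagonal, giving $\mathbbm{1}_{\{j=l\}}(\alpha_{j;n}a_n^2+\beta_{j;n}b_n^2)=\mathbbm{1}_{\{j=l\}}\sigma^2_{j;n}$, whereas an atom at $a_n\bm e_A$ contributes $a_n^2(\bm e_A)_j(\bm e_A)_l=a_n^2\,\mathbbm{1}_{\{j,l\in A\}}$, so the common inputs $M^{\pm}_{A;n}$ generate the genuinely off-diagonal mass $\sum_{A}\mathbbm{1}_{\{j,l\in A\}}\sigma^2_{A;n}$. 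Passing to the limit via \eqref{cond2} yields exactly $\psi_{jl}$ of \eqref{sigma}, so the second characteristic converges to $\bm\Psi\,t$. I expect this bookkeeping --- tracking how the shared jumps on each set $A$ feed simultaneously into two distinct components and thereby build the off-diagonal covariance --- to be the only genuinely delicate point, the drift and big-jump conditions being essentially automatic once \eqref{cond3}--\eqref{cond2} hold. Finally, positive definiteness of $\bm\Psi$ follows from $\langle\bm u,\bm\Psi\bm u\rangle=\sum_j u_j^2\sigma_j^2+\sum_A\bigl(\sum_{i\in A}u_i\bigr)^2\sigma_A^2\ge\sum_j u_j^2\sigma_j^2>0$ for $\bm u\neq\bm 0$, so the limit is a non-degenerate Wiener process; the three verified conditions then deliver tightness and convergence of the finite-dimensional distributions together, hence $\bm Z_n\Rightarrow\bm W$ in the Skorokhod space.
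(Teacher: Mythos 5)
Your proposal is correct and follows essentially the same route as the paper: both identify the characteristic triplet of the L\'evy process $\bm Z_n$ (zero continuous part, purely atomic L\'evy measure, drift cancelling against $\int\bm x\,\nu_n(d\bm x)$ so that $\bm b_n\equiv\bm 0$) and reduce the claim to the convergence $\int x_jx_l\,\nu_n(d\bm x)\to\psi_{jl}$ via Theorem VII.3.4 of \cite{Jacod}, with the same bookkeeping of diagonal contributions from $N^{\pm}_{j;n}$ and off-diagonal contributions from the shared inputs $M^{\pm}_{A;n}$. The only cosmetic differences are that you dispose of the jump-measure condition directly from the shrinking support of $\nu_n$ (the paper instead passes to $C_3$ test functions and bounds $|g|\le\varepsilon|\bm x|^2$), and you add an explicit positive-definiteness check of $\bm\Psi$ that the paper omits.
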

\noindent The proof of Lemma \ref{lemma1} is given in \ref{AppA}. Note that $Z_{j;n}(t)$ is the martingale part of $X_{j;n}(t)$, see \eqref{eq11}. Thus martingale limit theorems can alternatively be used for proving Lemma \ref{lemma1},  mimicking  the proofs in \cite{PTW,RTW}.\newline 
Finally, we show that $\bm{X}_n$ is a continuous functional of $\bm{Z}_n$, and it holds 
\begin{theorem}\label{theo}
Let $\bm x_{0;n}$ be a sequence in $\mathbb{R}^k$ converging to  $\bm y_0=(y_{01},\ldots, y_{0k})$. Then, the sequence of processes $\bm{X}_n$ defined by (\ref{Stein}) with rates fulfilling $(\ref{cond1}), (\ref{cond2})$, under conditions (\ref{cond3}), (\ref{cond4}), converges weakly to the multivariate OU diffusion process $\bm{Y}$ given by 
\begin{equation}\label{OU}
Y_{j}(t)=y_{0j}+ \int_{0}^t \left[-\frac{Y_{j}(s)}{\theta}+ 
\Gamma_{j} \right] ds + W_{j}(t), \qquad 1\leq j\leq k,
\end{equation}
where $\Gamma_{j}$ is defined by
\begin{eqnarray}\label{drift}
\Gamma _{j}=\mu _{j}+\sum_{A\in \mathcal{A}} \mathbbm{1}_{\{j \in A\}}\mu _{A}, \qquad 1\leq j \leq k,
\end{eqnarray}
and $\bm{W}$ is a $k$-dimensional Wiener process with mean $\bm{0}$ and covariance matrix $\bm \Psi$ given by \eqref{sigma}.
\end{theorem}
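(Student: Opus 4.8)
The plan is to exhibit $\bm{X}_n$ as the image of $\bm{Z}_n$ under an explicit solution map and then combine this with Lemma~\ref{lemma1} through a continuous-mapping argument. The first step is to record the pathwise identity linking the two processes: since $Z_{j;n}(t)+\Gamma_{j;n}t$ equals the bracketed jump part of \eqref{Stein}, substituting into \eqref{Stein} shows that each component solves the linear integral equation
\begin{equation*}
X_{j;n}(t)=x_{0j;n}+\Gamma_{j;n}t-\int_0^t\frac{X_{j;n}(s)}{\theta}\,ds+Z_{j;n}(t),\qquad 1\le j\le k,
\end{equation*}
driven pathwise by $Z_{j;n}$. Crucially, the same equation with $W_j$, $y_{0j}$, $\Gamma_j$ replacing $Z_{j;n}$, $x_{0j;n}$, $\Gamma_{j;n}$ is exactly the Ornstein--Uhlenbeck dynamics \eqref{OU}, so both $\bm{X}_n$ and the target $\bm{Y}$ are produced by one and the same functional acting on the respective driving path.

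Next I would solve this equation in closed form using the integrating factor $e^{t/\theta}$. Because $Z_{j;n}$ is of bounded variation (a linear drift plus finitely many jumps on compacts), the Riemann--Stieltjes integral $\int_0^t e^{s/\theta}\,dZ_{j;n}(s)$ is well defined, and integration by parts (valid since $s\mapsto e^{s/\theta}$ is continuous and $Z_{j;n}(0)=0$) eliminates the differential and yields
\begin{equation*}
X_{j;n}(t)=e^{-t/\theta}x_{0j;n}+\Gamma_{j;n}\theta\bigl(1-e^{-t/\theta}\bigr)+Z_{j;n}(t)-\frac{1}{\theta}\int_0^t e^{(s-t)/\theta}Z_{j;n}(s)\,ds.
\end{equation*}
Thus $\bm{X}_n=\Phi_n(\bm{Z}_n)$, where $\Phi_n$ acts componentwise and involves the driving path only through the evaluation $Z_{j;n}(t)$ and a Lebesgue integral of its trajectory; the inter-component dependence is carried entirely by the joint law of $\bm{Z}_n$, so it suffices that Lemma~\ref{lemma1} delivers the vector convergence $\bm{Z}_n\Rightarrow\bm{W}$.

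The crux is the continuity of this map in the Skorokhod topology. The integral functional $z\mapsto\int_0^\cdot e^{(s-\cdot)/\theta}z(s)\,ds$ is continuous on $D$ even in the uniform sense, and the only delicate term is the evaluation $z\mapsto z(\cdot)$, which fails to be continuous at paths carrying jumps. I expect this to be the main obstacle, and I would resolve it by exploiting that the limit $\bm{W}$ has continuous sample paths: for a continuous limit, $J_1$ convergence reduces to local uniform convergence. Concretely, I would invoke the Skorokhod representation theorem to realize $\bm{Z}_n\to\bm{W}$ almost surely in $D([0,\infty),\mathbb{R}^k)$; continuity of $\bm{W}$ upgrades this to convergence uniform on compact time intervals, along which the evaluation term, the Lebesgue integral, and the parameter terms all converge uniformly, the latter using $x_{0j;n}\to y_{0j}$ and $\Gamma_{j;n}\to\Gamma_j$ from \eqref{cond1}. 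This simultaneously handles the otherwise awkward $n$-dependence of the constants in $\Phi_n$. Hence $\Phi_n(\bm{Z}_n)\to\Phi_\infty(\bm{W})=\bm{Y}$ almost surely in $D$, and the desired weak convergence $\bm{X}_n\Rightarrow\bm{Y}$ follows.
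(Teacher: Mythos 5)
Your proposal is correct and follows essentially the same route as the paper: rewrite $\bm{X}_n$ via the driving process $\bm{Z}_n$, solve the linear equation with the integrating factor $e^{t/\theta}$, and transfer the weak convergence $\bm{Z}_n\Rightarrow\bm{W}$ from Lemma~\ref{lemma1} through a continuous-mapping argument. You are in fact slightly more careful than the paper on two points: your closed-form solution correctly retains the term $\Gamma_{j;n}\theta\bigl(1-e^{-t/\theta}\bigr)$, which is missing from the paper's displayed formula, and you justify the continuity of the solution map (via Skorokhod representation and the continuity of the limit) rather than merely asserting it.
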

\noindent The proof of Theorem \ref{theo} is given in \ref{AppA}.
\begin{remark}
If all $\sigma_j^2$ and $\sigma_A^2$ in \eqref{cond2} equal 0, Theorem \ref{theo} yields a deterministic (fluid) limit and results from \cite{PTW} can be applied. 
\end{remark}
\begin{remark}
Theorem \ref{theo} also holds when $\left(\bm x_{0;n}\right)_{n\geq 1}$ is a random sequence converging to a random vector $\bm y_0$.
\end{remark}
\begin{remark}\label{Remark2.4}
The obtained OU process can be rewritten as 
\begin{equation}\label{OUeq}
d\bm{Y}(t)=(-\bm{C}\bm{Y}(t) + \bm{D})dt + d\bm{W}(t),
\end{equation}
where $\bm{C}$ is a diagonal $k\times k$ matrix, $\bm D$ is a $k$-dimensional vector and $\bm W$ is a multivariate Wiener process with definite positive non-diagonal covariance matrix $\bm\Psi$ representing correlated Gaussian noise. For simulation purposes, the diffusion part in \eqref{OUeq} should be rewritten through the Cholesky decomposition. A modification of the original Stein model can be obtained introducing direct interactions between the $i$th and $j$th components. The corresponding diffusion limit process verifies \eqref{OUeq} with $\bm {C}$ non-diagonal matrix.
\end{remark}

\section{The multivariate FPT problem: preliminaries}\label{Section3}
Consider a sequence $(\bm{X}_n)_{n\geq 1}$ of multivariate jump processes weakly converging to $\bm{Y}$. Let $\bm{B}=(B_1,\ldots, B_k)$ be a $k$-dimensional vector of boundary values, where $B_j$ is the boundary of the $j$th component of the process. We denote $T_{j;n}$ the crossing time of the $j$th component of the jump process  through the boundary $B_j$, with $B_j>x_{0j;n}$. That is
\[
T_{j;n}=T_{B_j}(X_{j;n})=\inf \{ t>0: X_{j;n}(t)>B_j\}.
\]
Moreover, we denote $\tau_{1;n}$ the minimum of the FPTs of the multivariate jump process $\bm X_n$, i.e.
\[
 \tau_{1;n}=\min\left(T_{1;n},\ldots,T_{k;n}\right),
\]
and $\eta_{1;n}\subset \{1,\ldots,k\}$ the discrete random variable specifying the set of jumping components at time $\tau_{1;n}$. \newline
We introduce the reset procedure as follows. Whenever a component $j$ attains its boundary, it is instantaneously reset to  $r_{0j}<B_j$, and then it restarts, while the other components pursue their evolution till the attainment of their boundary. This procedure determines the new process $\bm{X}^*_n$. We define it  by introducing a sequence $\left( \bm{X}_n^{\left( m\right) }\right) _{m\geq 1}$ of multivariate jump processes defined on successive time windows, i.e. $\bm X^{(m)}_n$ is defined on the $m$th time window, for $m=1,2,\ldots$. Conditionally on $(\bm{X}^{(1)}_n,\ldots, \bm{X}_n^{(m)})$, $\bm{X}_n^{(m+1)}$ obeys to the same stochastic differential equation as $\bm{X}_n$, with random starting position determined by $(\bm{X}^{(1)}_n,\ldots, \bm{X}_n^{(m)})$. In particular, the first time window contains the process $\bm{X}_n$ up to $\tau_{1;n}$, which we denote by $\bm{X}_n^{(1)}$. The second time window contains the  process $\bm{X}_n^{(2)}$ whose components are originated in $\bm{X}_n^{(1)}(\tau_{1;n})$, except for the crossing components $\eta_{1;n}$,  which are set to their reset values. This second window lasts until when one of the component attains its boundary at time $\tau_{2;n}$. Successive time windows are analogously introduced, defining the corresponding processes.\newline
Similarly, we define $T_j$ and $\tau_1$ for the process $\bm Y$, while $\eta_1\in \{1,\ldots, k\}$ is defined as the discrete random variable  specifying the jumping component at time $\tau_1$, since simultaneous jumps do not occur for $\bm Y$. We define the reset process $\bm{Y}^*$ by introducing a sequence $\left( \bm{Y}^{\left( m\right) }\right) _{m\geq 1}$ of multivariate diffusion processes. Set $\bm{Y}^{\left( 1\right) }\equiv \bm{Y}$. Conditionally on $\left( \bm{Y}^{\left( 1\right) },\ldots ,\bm{Y}^{\left( m\right)}\right) $, $\bm{Y}^{\left( m+1\right) }$ obeys to the same stochastic differential equation as $\bm{Y}$, with random starting position  determined by $\left( \bm{Y}^{\left( 1\right)
},\ldots ,\bm{Y}^{\left( m\right) }\right) $ and with the $k$-dimensional Brownian motion $\bm W$  independent of $\left( \bm{Y}^{\left( 1\right) },\ldots ,\bm{Y}^{\left( m\right) }\right) $, for $m\geq 1$. Below we shall briefly say that $\bm{X}_n^{\left( m+1\right) }$ (or $\bm{Y}^{\left( m+1\right) }$) is obtained by conditional independence and then specify the initial value $\bm x_{0;n}$ (or $\bm y_{0}$).\newline
Now we formalize the recursive definition of $\bm X^*_n$ and $\bm Y^*$ on consecutive time windows. A schematic illustration of the involved variables is given in Fig. \ref{FigStein}.

\begin{figure}[t]
\includegraphics[width=\textwidth]{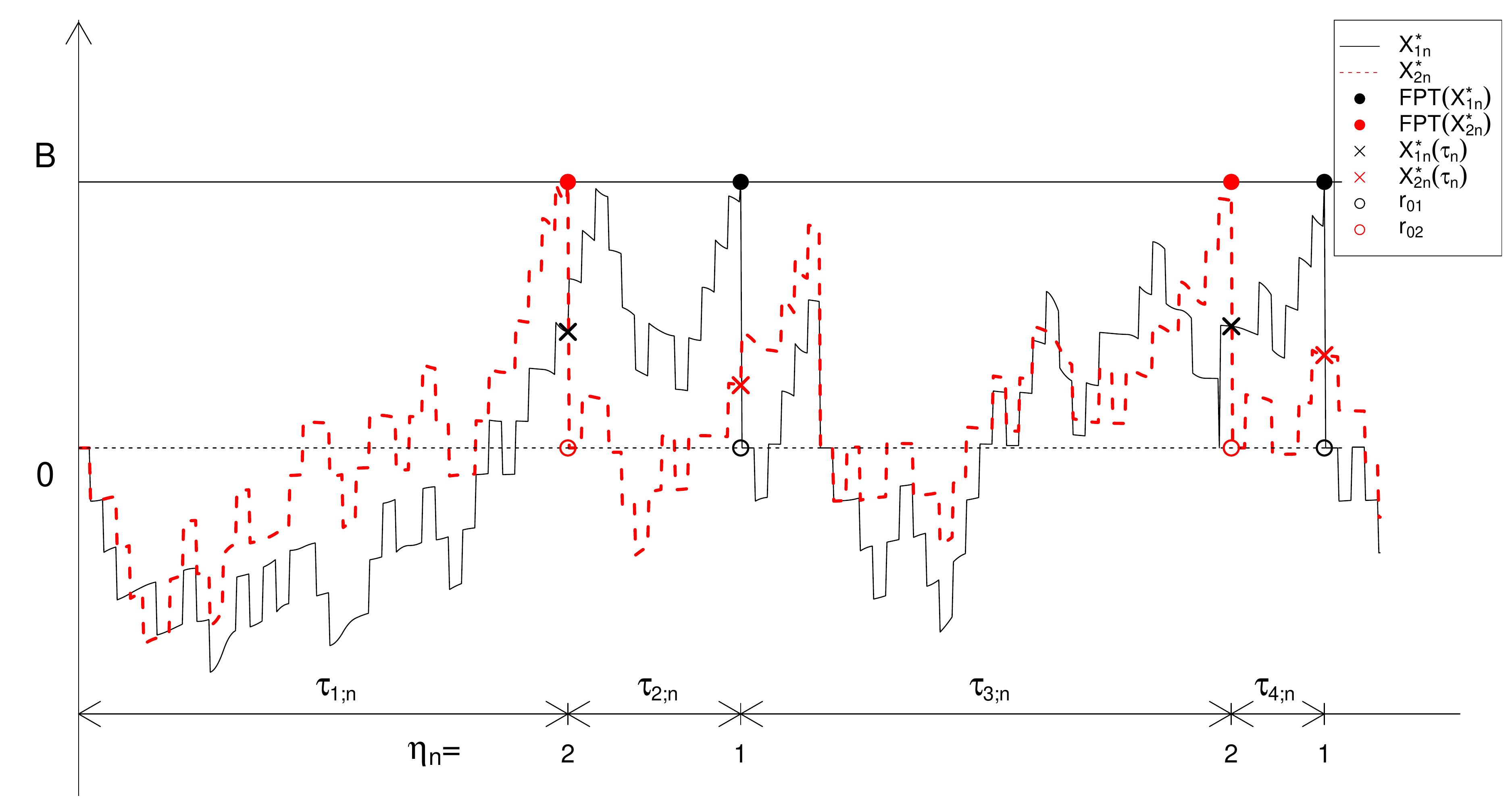}
\caption{Illustration of a bivariate jump process with reset $\bm{X}^*_n=(X_{1;n}^*,X_{2;n}^*)$. Whenever a component $j$ reaches its boundary $B_j$, it is instantaneously reset to its resting value $r_{0j}<B_j$. The process is defined in successive time windows determined by the FPTs of the process. Here $\eta_{i;n}$ denotes the set of jumping components at time $\tau_{i;n}$, which is the FPT of $\bm X^{(i)}_n$ in the $i$th time window.}
\label{FigStein}
\end{figure}

\begin{enumerate}
\item[Step $m=1$.] Define $\bm X_n^*(t) = \bm X_n(t)$ on the interval $[0,\tau_{1;n}[$ and $\bm Y^*(t)= \bm Y(t)$ on $[0,\tau_1[$, with resetting value $\bm X_n^*(0)=\bm r_0=\bm Y^*(0)$. Define $X_{j;n}^*(\tau_{1;n})=X_{j;n}(\tau_{1;n})$ if $j\not\in \eta_{1;n}$ or $X_{j;n}^*(\tau_{1;n})=r_{0j}$ if $j\in\eta_{1;n}$. Similarly define $Y_j^*(\tau_1)= Y_j(\tau_1)$ if $j\neq \eta_1$ or $Y_j^*(\tau_1)=r_{0j}$ if $j=\eta_1$.
\item[Step $m=2$.] For $j\in \eta _{1;n}$, obtain $\bm{X}_n^{\left( 2\right) }$ by conditional independence from $\bm{X}_n^{\left( 1\right) }$, with initial value $\bm{x}_{0;n}=\bm{X}_n^{*}\left( \tau
_{1;n}\right) $. Similarly, for $\eta _1=j$, obtain $\bm{Y}^{\left(
2\right) }$ by conditional independence from $\bm{Y}^{\left( 1\right) }$, with initial value $\bm{y}_0=\bm{Y}%
^{*}\left( \tau _1\right) $. Then, define $T_{j;n}^{ (2)}, \tau_{2;n}, \eta_{2;n}$ from $\bm{X}^{(2)}_n$ and $T_{j}^{(2)}, \tau_{2}, \eta_{2}$ from $\bm{Y}^{(2)}$, for $m=1$. Define $\bm{X}_n^*(t)=X^{(2)}_n(t-\tau_{1;n})$ on the interval $[\tau_{1;n},\tau_{1;n}+\tau_{2;n}[$ and $\bm Y^*(t)=\bm Y(t-\tau_1)$ on $[\tau_1,\tau_1+\tau_2[$. Then define $X_{j;n}^{*}\left( \tau _{1;n}+\tau _{2;n}\right)=X_{j;n}^{(2)}(\tau_{2;n})$ if $j\not\in \eta_{2;n}$ or $X_{j;n}^{*}\left( \tau _{1;n}+\tau _{2;n}\right)=r_{0j}$ if $j\in \eta_{2;n}$. Similarly define $Y_j^*\left(\tau_1+\tau_2\right)=Y_j^{(2)}(\tau_2)$ if $j\neq \eta_{2}$ or $Y_j^*\left(\tau_1+\tau_2\right)=r_{0j}$ if $j=\eta_2$.
\item[Step $m>2$.] For $j\in \eta_{m;n}$, obtain $\bm X^{(m)}_n$ by conditional independence from $\bm X^{(m-1)}$, with initial value $\bm x_{0;n}=\bm X^*_n(\sum_{l=1}^{m-1} \tau_{l;n})$. Similarly, for $\eta_m=j$, obtain $\bm Y^{(m)}$ by conditional independence from $\bm Y^{(m-1)}$, with initial value $\bm y_0=\bm Y^*(\sum_{l=1}^{m-1} \tau_{l})$. Define, $T_{j;n}^{(m)}, \tau_{m;n}, \eta_{m;n}$ from $\bm X^{(m)}_n$ and $T_{j}^{(m)}, \tau_{m}, \eta_{m}$ from $\bm Y^{(m)}$ as above. Define $\bm X_n^*(t)=\bm{X}_n^{(m)}(t-\sum_{l=1}^{m-1}\tau_{l;n})$ for $t \in [\sum_{l=1}^{m-1}\tau_{l;n}$, $\sum_{l=1}^{m}\tau_{l;n}[$ and $\bm Y^*(t)=\bm{Y}^{(m)}(t-\sum_{l=1}^{m-1}\tau_{l})$ for $t \in [\sum_{l=1}^{m-1}\tau_{l},\sum_{l=1}^{m}\tau_{l}[$. Then define $\bm{X}_{j;n}^{*}\left( \sum_{l=1}^m\tau_{l;n}\right)=X_{j;n}^{(r)}(\tau_{m;n})$ if $j\not\in \eta_{m;n}$ or $\bm{X}_{j;n}^{*}\left( \sum_{l=1}^m\tau_{l;n}\right)=r_{0j}$ if $j\in \eta_{m;n}$. Similarly define $\bm Y_j^*\left(\sum_{l=1}^m \tau_l\right)=Y_j^{(m)}(\tau_m)$ if $j\neq \eta_m$ or $\bm Y_j^*\left(\sum_{l=1}^m \tau_l\right)=r_{0j}$ if $j=\eta_m$.
\end{enumerate}
Besides the processes $\bm{X}^*_n$ and $\bm{Y}^*$, we introduce a couple of marked processes as follows. 
 Denote $\bm{\tau}_n=(\tau_{i;n})_{i\geq 1}, \bm{\tau}=(\tau_i)_{i\geq 1}, \bm{\eta}_n=(\eta_{i;n})_{i\geq 1}$ and $ \bm{\eta}=(\eta_i)_{i\geq 1}$. Then  $(\bm{\tau}_n,\bm{\eta}_n)$ and $(\bm{\tau},\bm{\eta})$ may be viewed as marked point processes describing the passage times of the processes $\bm X^*_n$ and $\bm Y^*$, respectively. These marked processes are superposition of point processes generated by crossing times of the single components.

\section{Main result on the convergence of the marked point process}\label{Section4}
The processes $\bm{X}_n^*$ and $\bm{Y}^*$ are neither continuous nor diffusions. Hence the convergence of $\bm{X}_n^*$ to $\bm{Y}^*$ does not directly follow from the convergence of $\bm{X}_n$ to $\bm{Y}$. Since the FPT is not  a continuous function of the process, the convergence of the marked point process $(\bm{\tau}_{n},\bm{\eta}_n)$ to $(\bm{\tau},\bm{\eta})$ has also to be proved. Proceed as follows. Consider the space $\mathcal{D}^{k}=\mathcal{D}([0,\infty[,\mathbb{R}^k)$, i.e. the space of functions $f:[0,\infty )\rightarrow \mathbb{R}^{k}$ that are right continuous and have a left limit at each $t\geq 0$, and the space $\mathcal{C}^{1}=\mathcal{C}\left( \left[ 0,\infty \right[ ,\mathbb{R}\right) $. For $y^{\circ }\in \mathcal{C}^{1}$, define the hitting time 
\[
\widetilde{T}_{B}\left( y^{\circ }\right) =\inf \left\{ t>0:y^{\circ
}(t)=B\right\},
\]%
and introduce the sets
\begin{eqnarray*}
H&=&\left\{ y^{\circ }\in \mathcal{C}^{1}:T_{B}\left( y^{\circ }\right) =%
\widetilde{T}_{B}\left( y^{\circ }\right) \right\},\\
H^k&=&\left\{ \bm y^\circ \in \mathcal{C}^k: T_{B_j}\left( y^{\circ }_j\right) =%
\widetilde{T}_{B_j}\left( y^{\circ }_j\right) \textrm {for all } 1\leq j \leq k\right\} .
\end{eqnarray*}
The hitting time $\widetilde T_B$ defines the first time when a process reaches $B$, while the FPT $T_B$ is defined as the first time when a process crosses $B$. Denote by\\ \lq\lq$\to$ in $\mathcal{D}^k$\rq\rq \ the  convergence of a sequence of functions in $\mathcal{D}^k$ and  by \lq\lq $\to$\rq\rq \ the ordinary convergence of a sequence of real numbers. To prove the main theorem, we need the following lemmas, whose proof are given in Section \ref{Section5}.
\begin{lemma}\label{FPTas}
Let $x_{n}^{\circ }$ belong to $\mathcal{D}^{1}$ for $n\geq 1$, and $%
y^{\circ }\in H$ with $y^{\circ }(0)<B$. If $x_{n}^{\circ }\rightarrow
y^{\circ }$ in  $\mathcal{D}^{1}$, then $T_{B}\left( x_{n}^{\circ }\right) \rightarrow T_{B}\left(
y^{\circ }\right)$.
\end{lemma}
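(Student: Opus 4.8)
The plan is to exploit the fact that, because the limit $y^\circ$ is \emph{continuous}, Skorohod ($J_1$) convergence $x_n^\circ \to y^\circ$ in $\mathcal{D}^1$ is equivalent to uniform convergence on every compact time interval; this is the standard reduction that replaces the awkward time-changes in the definition of the $J_1$-topology by ordinary uniform estimates. Write $t^\ast := T_B(y^\circ)$. Since $y^\circ(0)<B$ and $y^\circ\in H$, we have $t^\ast = \widetilde T_B(y^\circ)$, and I would first record the elementary consequences of continuity: $t^\ast>0$ (a neighbourhood of $0$ stays below $B$), $y^\circ(s)<B$ for every $s<t^\ast$ (otherwise the intermediate value theorem would produce an earlier hitting of $B$, contradicting the definition of $\widetilde T_B$), and $y^\circ(t^\ast)=B$. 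The goal is then the two-sided bound $\liminf_n T_B(x_n^\circ)\ge t^\ast$ and $\limsup_n T_B(x_n^\circ)\le t^\ast$.

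For the lower bound, fix $\epsilon\in(0,t^\ast)$. The continuous function $y^\circ$ attains its maximum on the compact set $[0,t^\ast-\epsilon]$, and by the previous paragraph that maximum equals $B-\gamma$ for some $\gamma>0$. Uniform convergence on $[0,t^\ast-\epsilon]$ then gives $\sup_{s\le t^\ast-\epsilon}|x_n^\circ(s)-y^\circ(s)|<\gamma$ for all large $n$, hence $x_n^\circ(s)<B$ throughout $[0,t^\ast-\epsilon]$ and therefore $T_B(x_n^\circ)\ge t^\ast-\epsilon$. Letting $\epsilon\downarrow 0$ yields $\liminf_n T_B(x_n^\circ)\ge t^\ast$.

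For the upper bound I would use the hypothesis $y^\circ\in H$ in an essential way. Because $T_B(y^\circ)=\widetilde T_B(y^\circ)=t^\ast$, the function genuinely crosses the level: for every $\delta>0$ there is a point $t\in(t^\ast,t^\ast+\delta)$ with $y^\circ(t)>B$. Pointwise convergence $x_n^\circ(t)\to y^\circ(t)>B$ then forces $x_n^\circ(t)>B$ for all large $n$, so $T_B(x_n^\circ)\le t<t^\ast+\delta$, and letting $\delta\downarrow 0$ gives $\limsup_n T_B(x_n^\circ)\le t^\ast$. Combining the two bounds proves $T_B(x_n^\circ)\to T_B(y^\circ)$.

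I expect the main conceptual obstacle to be precisely the step above, namely seeing why membership in $H$ cannot be dropped. If $y^\circ$ merely \emph{touched} $B$ at $t^\ast$ from below and then retreated (a tangency), one would still have $\widetilde T_B(y^\circ)=t^\ast$ but $T_B(y^\circ)>t^\ast$ (possibly $+\infty$), and the approximating crossing times $T_B(x_n^\circ)$ could cluster near $t^\ast$ rather than near $T_B(y^\circ)$, so the conclusion would fail; the condition $T_B=\widetilde T_B$ is exactly what rules this out and supplies the crossing points needed for the upper bound. The remaining care is bookkeeping: justifying the Skorohod-to-uniform reduction, and treating the degenerate cases $t^\ast=0$ (excluded by $y^\circ(0)<B$) and $t^\ast=+\infty$ (where the same estimates show $T_B(x_n^\circ)\to+\infty$).
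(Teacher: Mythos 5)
Your argument is correct and is essentially the paper's own proof: the lower bound $\liminf_n T_B(x_n^\circ)\ge T_B(y^\circ)$ via uniform convergence on compact intervals strictly before the crossing, and the upper bound via points $t\downarrow T_B(y^\circ)$ with $y^\circ(t)>B$, whose existence is exactly what $y^\circ\in H$ guarantees. Your added remarks on the tangency counterexample and the cases $t^\ast=0$ and $t^\ast=+\infty$ are sound but go beyond what the paper records.
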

\begin{lemma}\label{window1FPT}
Let $\bm x^\circ_n$ belong to $\mathcal{D}^k$ for $n\geq 1$,  $\bm y^\circ\in H^k$ with $\bm y^\circ(0)<\bm B$. If $\bm x^\circ_n\to \bm y^\circ$ in  $\mathcal{D}^{k}$, then  
\begin{equation}\label{triplets}
(\tau_{1;n}^\circ, \bm x^\circ_{n}(\tau^\circ_{1;n}),\eta^\circ_{1;n})\to (\tau^\circ_{1},\bm{y}^\circ(\tau^\circ_{1}),\eta^\circ_{1}).
\end{equation}
\end{lemma}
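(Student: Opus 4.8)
The plan is to reduce the $k$-dimensional statement to $k$ separate applications of Lemma \ref{FPTas} and then assemble the three coordinates of the triple one at a time. The key preliminary observation is that the limit $\bm y^\circ$ lies in $\mathcal{C}^k$ and is therefore continuous; since Skorokhod convergence to a continuous function is equivalent to local uniform convergence, the hypothesis $\bm x^\circ_n\to\bm y^\circ$ in $\mathcal{D}^k$ upgrades to $\sup_{t\in[0,T]}\|\bm x^\circ_n(t)-\bm y^\circ(t)\|\to 0$ for every $T>0$. In particular each coordinate $x^\circ_{j;n}$ converges to $y^\circ_j$ uniformly on compacts, hence in $\mathcal{D}^1$, and from $\bm y^\circ\in H^k$ together with $\bm y^\circ(0)<\bm B$ one reads off that $y^\circ_j\in H$ and $y^\circ_j(0)<B_j$ for each $1\le j\le k$, so the hypotheses of Lemma \ref{FPTas} are met coordinatewise.

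First I would treat the passage time. Lemma \ref{FPTas} applied in each coordinate gives $T_{B_j}(x^\circ_{j;n})\to T_{B_j}(y^\circ_j)$ for every $j$; as the minimum of finitely many convergent real sequences converges to the minimum of the limits, $\tau^\circ_{1;n}=\min_j T_{B_j}(x^\circ_{j;n})\to\min_j T_{B_j}(y^\circ_j)=\tau^\circ_1$. For the value at the crossing time I would use the triangle-inequality split
\[
\|\bm x^\circ_n(\tau^\circ_{1;n})-\bm y^\circ(\tau^\circ_1)\|\le \sup_{t\in[0,T]}\|\bm x^\circ_n(t)-\bm y^\circ(t)\|+\|\bm y^\circ(\tau^\circ_{1;n})-\bm y^\circ(\tau^\circ_1)\|,
\]
choosing $T$ so large that all $\tau^\circ_{1;n}$ and $\tau^\circ_1$ lie in $[0,T]$, which is possible since $\tau^\circ_{1;n}\to\tau^\circ_1$. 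The first term vanishes by local uniform convergence and the second by continuity of $\bm y^\circ$ combined with $\tau^\circ_{1;n}\to\tau^\circ_1$, yielding $\bm x^\circ_n(\tau^\circ_{1;n})\to\bm y^\circ(\tau^\circ_1)$.

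It remains to handle the crossing index $\eta^\circ_{1;n}$, which I expect to be the delicate point. Here I would use that, for the limit, $\eta^\circ_1$ is the \emph{unique} minimizing component, i.e. $T_{B_{j^*}}(y^\circ_{j^*})=\tau^\circ_1$ for a single $j^*=\eta^\circ_1$ while $T_{B_j}(y^\circ_j)>\tau^\circ_1$ for all $j\neq j^*$. Under this uniqueness the coordinatewise convergence $T_{B_j}(x^\circ_{j;n})\to T_{B_j}(y^\circ_j)$ forces, for all large $n$, $T_{B_j}(x^\circ_{j;n})>\tau^\circ_{1;n}$ whenever $j\neq j^*$, so the minimum is attained solely at $j^*$ and $\eta^\circ_{1;n}=\{j^*\}=\eta^\circ_1$ eventually; an eventually constant sequence converges. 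The main obstacle is precisely this uniqueness: in a purely deterministic statement it must be assumed (or read off the generic configuration of $\bm y^\circ$), since it is what rules out the pathology of two components reaching their boundaries at the same instant in the limit, where arbitrarily small perturbations could reassign the minimizer and make $\eta^\circ_{1;n}$ oscillate. When the lemma is later applied to the random processes of Section \ref{Section3}, this uniqueness holds almost surely because simultaneous crossings do not occur for the diffusion $\bm Y$, so the deterministic argument above applies pathwise.
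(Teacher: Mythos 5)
Your proposal is correct and follows essentially the same route as the paper: Lemma \ref{FPTas} applied coordinatewise (using that Skorokhod convergence to a continuous limit is local uniform convergence, and that convergence in $\mathcal{D}^k$ reduces to componentwise convergence), the identical triangle-inequality split for $\bm x^\circ_n(\tau^\circ_{1;n})\to\bm y^\circ(\tau^\circ_1)$, and eventual stabilization of the crossing index. The only difference is that you make explicit the uniqueness of the minimizing component needed for the convergence of $\eta^\circ_{1;n}$ — a point the paper's proof asserts in one line and justifies only via the almost-sure absence of simultaneous crossings for the limiting diffusion — so your treatment is, if anything, slightly more careful on that step.
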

The weak convergence of the multivariate process with reset and of its marked point process corresponds to the weak convergence of the finite dimensional distributions of $(\bm{\tau}_n, \bm{X}^*_n(\bm{\tau}_n),\bm{\eta}_n)$ to
$(\bm{\tau}, \bm{Y}^*(\bm\tau),\bm{\eta})$, where $\bm \tau_n=(\tau_{i;n})_{i=1}^l$, $ \bm{X}^*_n(\bm\tau_n)=\left(\bm{X}^*_n(\tau_{i;n})\right)_{i=1}^l, \bm \eta_n =(\eta_{i;n})_{i=1}^l, \bm{\tau}=(\tau_i)_{i=1}^l,  \bm{Y}^*(\bm{t\tau}) =\left(\bm{Y}^*(\tau_{i})\right)_{i=1}^l$ and $\bm{\eta}=(\eta_i)_{i=1}^l$, for any $l \in \mathbb{N}$. We have
\begin{theorem}[Main theorem]\label{conspike}
The finite dimensional distributions of \\ $(\bm{\tau}_n, \bm{X}^*_n(\bm\tau_n),\bm{\eta}_n)$  converge weakly to those of $(\bm{\tau}, \bm{Y}^*(\bm\tau),\bm{\eta})$.
\end{theorem}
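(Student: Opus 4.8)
The plan is to reduce the weak-convergence statement to an almost-sure statement via the Skorokhod representation theorem and then to propagate convergence window by window, treating the reset as a continuous operation and invoking Lemma \ref{window1FPT} at each step. The argument proceeds by induction on the number $l$ of time windows entering the finite-dimensional vector. For the base case $l=1$, I would start from the weak convergence of $\bm{X}_n$ to $\bm{Y}$ in $\mathcal{D}^k$ given by Theorem \ref{theo}. Since $\mathcal{D}^k$ with the Skorokhod topology is Polish, the Skorokhod representation theorem furnishes, on one probability space, copies $\tilde{\bm{X}}_n$, $\tilde{\bm{Y}}$ with the same laws and $\tilde{\bm{X}}_n\to\tilde{\bm{Y}}$ almost surely. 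The first point to verify is that $\tilde{\bm{Y}}\in H^k$ and $\tilde{\bm{Y}}(0)<\bm{B}$ with probability one: because the covariance matrix $\bm{\Psi}$ from Lemma \ref{lemma1} is positive definite, $\bm{Y}$ is a non-degenerate diffusion, so each boundary point is regular and a.s. the hitting time equals the crossing time for every component, while no two components reach their boundaries simultaneously; moreover $\bm{Y}(0)=\bm{r}_0<\bm{B}$. Lemma \ref{window1FPT} then applies pathwise and gives the a.s., hence weak, convergence of $(\tau_{1;n},\bm{X}_n(\tau_{1;n}),\eta_{1;n})\to(\tau_1,\bm{Y}(\tau_1),\eta_1)$.

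For the inductive step, suppose the finite-dimensional vector associated with the first $m$ windows converges weakly. A key ingredient I would isolate is that the reset operation, mapping a pre-reset value $\bm{v}$ and a jumping set $\eta$ to the vector with $j$th coordinate $r_{0j}$ for $j\in\eta$ and $v_j$ otherwise, is continuous at the relevant limit points. Combined with the inductive convergence of the marks and of $\bm{X}_n^*$ at the window ends, this yields weak convergence of the restart positions $\bm{R}_n^{(m)}:=\bm{X}_n^*(\sum_{l\le m}\tau_{l;n})$ to $\bm{R}^{(m)}:=\bm{Y}^*(\sum_{l\le m}\tau_l)$, with $\bm{R}^{(m)}<\bm{B}$ a.s.: the firing coordinate is reset to $r_{0j}<B_j$, and the remaining coordinates are strictly below their boundaries since in the limit exactly one component fires at each $\tau_m$.

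I would then invoke the conditional-independence construction together with the version of Theorem \ref{theo} for random, converging starting positions (the remark following Theorem \ref{theo}): conditionally on the past windows, window $m+1$ is a fresh copy of the process started at $\bm{R}_n^{(m)}$, so the joint law of the first $m$ windows and of the $(m+1)$st window process converges weakly. A further application of the Skorokhod representation to this joint convergence, followed by Lemma \ref{window1FPT} applied to the $(m+1)$st window process (again in $H^k$ and starting strictly below $\bm{B}$ a.s., by the same non-degeneracy argument), delivers the a.s. convergence of the new triplet $(\tau_{m+1;n},\bm{X}_n^{(m+1)}(\tau_{m+1;n}),\eta_{m+1;n})$ and closes the induction.

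The main obstacle I anticipate is the rigorous handling of this recursive coupling: one must combine the conditional-independence structure with the weak convergence under converging random initial conditions so that the Skorokhod couplings chosen at successive windows are mutually compatible, while simultaneously guaranteeing that every limiting restart point lands strictly below the boundary in $H^k$ (so that Lemma \ref{window1FPT} keeps applying) and that the set-valued marks $\eta_{m;n}$ eventually coincide with the single-component limit mark $\eta_m$. Establishing, with probability one, the regularity properties of the limit diffusion that underpin these facts, namely transversal crossing (hitting time equal to crossing time) and the absence of simultaneous multi-component hits, is precisely what makes the reset map continuous at the limit and is the technical heart of the argument.
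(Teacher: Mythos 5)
Your proposal follows essentially the same route as the paper's own proof: the Skorohod representation theorem converts weak to almost sure convergence, the argument then proceeds recursively window by window via Lemma \ref{window1FPT}, with the limit diffusion shown to lie in $H^k$ (hitting time equal to crossing time, no simultaneous crossings) and the reset handled by noting both pre-limit and limit firing components are sent to the same value $r_{0j}<B_j$. Your explicit flagging of the compatibility of the successive Skorohod couplings and of the strict sub-boundary position of the restart points makes the argument, if anything, slightly more careful than the paper's, but the decomposition and key ingredients are identical.
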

The proof of Theorem \ref{conspike} (cf. Section \ref{Section5}) uses the Skorohod's representation theorem \cite{Skorohod1} to switch the weak convergence of processes to almost sure convergence (strong convergence) in any time window between two consecutive passage times, which makes it possible to exploit Lemmas \ref{FPTas} and \ref{window1FPT}. As a consequence, the strong convergence of the processes implies the strong convergence of their FPTs.
\begin{remark}\label{remark2}
Theorem \ref{conspike} holds for any multivariate jump process weakly converging to a continuous process characterized by simultaneous hitting and crossing times for each component, i.e. $\tilde T_{B_j}=T_{B_j}$. Examples are diffusion processes and continuous processes with positive derivative at the epoch of the hitting time. 
\end{remark}
\begin{remark}\label{Remark4.1}
Both the weak convergence of $\bm{X}^*_n$ and of its marked point process also hold when the reset of the crossing component $j$ is not instantaneous, but happens with a delay $\Delta_j>0$, for $1\leq j\leq k$. This can be proved mimicking the proof of Theorem \ref{conspike}. 
\end{remark}

\section{Proof of the main results}\label{Section5}
\begin{proof}[Proof of Lemma \ref{FPTas}]
For each $s<T_{B}\left( y^{\circ }\right) $, $\sup_{t\leq s}y^{\circ }(t)<B$
and since $x_{n}^{\circ }\rightarrow y^{\circ }$ uniformly on $\left[ 0,s%
\right]$, also $\sup_{t\leq s}x_n^{\circ }(t)<B$ for $n$ sufficiently large.
This implies 
\begin{eqnarray*}
&&\lim \inf_{n\rightarrow \infty }T_{B}\left( x_{n}^{\circ }\right)  \geq s%
\text{ for all }s<T_{B}\left( y^{\circ }\right) \\
& \Rightarrow&\lim \inf_{n\rightarrow \infty }T_{B}\left( x_{n}^{\circ
}\right)  \geq T_{B}\left( y^{\circ }\right).
\end{eqnarray*}%
Because $y^{\circ }\in H$ we can find a sequence $t_k$ such that $t_{k}\downarrow T_{B}\left( y^{\circ
}\right) =\widetilde{T}_{B}\left( y^{\circ }\right) $ (with  $%
y^{\circ }\left( \widetilde{T}_{B}\left( y^{\circ }\right) \right) =B$) and $y^{\circ }\left( t_{k}\right) >B$ for all $k$. Since $x_{n}^{\circ
}\left( t_{k}\right) \rightarrow y^{\circ }\left( t_{k}\right) $ for all $k$%
, it follows that $T_{B}\left( x_{n}^{\circ }\right) \leq t_{k}$ for $n$
sufficiently large and therefore%
\begin{eqnarray*}
&\forall k,&\ \lim \sup_{n\rightarrow \infty }T_{B}\left( x_{n}^{\circ
}\right)  \leq t_{k} \\
&\Rightarrow& \lim \sup_{n\rightarrow \infty }T_{B}\left( x_{n}^{\circ
}\right)  \leq T_{B}\left( y^{\circ }\right) .
\end{eqnarray*}
\end{proof}

\begin{proof}[Proof of Lemma \ref{window1FPT}]
If $\eta^\circ_1=j$, then $\eta^\circ_{1;n}=j$ for $n$ large enough, since marginally $x_{i; n}^\circ\to  y^\circ_{i; n}$ for each component $1\leq i\leq k$. 
By Lemma \ref{FPTas} and since $y^\circ_{j}(0)<B_{j}$ by assumption, it follows 
\begin{equation}\label{astau}
\tau^\circ_{1;n}=T_{B_{j}}(x^\circ_{j;n})\to T_{B_j}(y^\circ_{j})=\tau^\circ_{1}.
\end{equation}
Moreover, it holds 
\begin{equation}\label{eqdis}
\vert x^\circ_{i;n}(\tau^\circ_{1;n})-y^\circ_i(\tau^\circ_{1})\vert \leq\vert x_{i;n}^\circ(\tau^\circ_{1;n}) - y^\circ_{i}(\tau^\circ_{1;n})\vert + \vert y^\circ_i(\tau^\circ_{1;n})-y^\circ_i(\tau^\circ_{1})\vert,
\end{equation}
which goes to zero when $n\to \infty$, for any $1\leq i\leq k$. Indeed, for each $s<\tau^\circ_1$, the convergence of $x^\circ_{i;n}$ to $y^\circ_i$ on a compact time interval $[0,s]$ implies the uniform convergence of $x^\circ_{i;n}$ to $y^\circ_i$ on $[0,s]$. Thus $x^\circ_{i;n}(\tau^\circ_{1;n})\to y^\circ_{i}(\tau^\circ_{1;n})$. From \eqref{astau} and since $y^\circ_i$ is continuous, $y^\circ_i(\tau^\circ_{1;n})\to y_i^\circ(\tau_1^\circ)$ when $n\to \infty$ for the continuous mapping theorem. Using the product topology on $\mathcal{D}^{k}$, we have that $\bm x^\circ_n\to \bm y^\circ$ in $\mathcal{D}^k$ if $x^\circ_{j; n}\to y^\circ_{j}$ in $\mathcal{D}^1$, for each $1\leq j\leq k$ \cite{Whittbook}, implying the lemma.
\end{proof}

Denote $E\stackrel{d}{=}F$ two random variables that are identically distributed. Then
\begin{proposition}\label{cor}
If a multivariate jump process $\bm X_n$ converges weakly to $\bm Y$, then there exist a probability space $(\Omega, \mathcal{F},\bm{P})$ and random elements  $\left(\widetilde{\bm  X}_n\right)_{n=1}^{\infty}$ and $\widetilde{\bm{Y}}$ in the Polish space $\mathcal{D}^k$, defined on  $(\Omega, \mathcal{F},\bm{P})$ such that $\bm{X}_n \stackrel{d}{=} \widetilde{\bm{X}}_n, \bm{X}\stackrel{d}{=}\widetilde{\bm{Y}}$ and $\widetilde{\bm{X}}_n \to \widetilde{\bm{Y}} \textit{ a.s. as } n\to \infty$.
\end{proposition}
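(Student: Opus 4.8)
The plan is to recognize Proposition \ref{cor} as a direct instance of Skorohod's representation theorem \cite{Skorohod1}, so that the only real work is to check its hypotheses. That theorem asserts that whenever a sequence of random elements converges weakly in a Polish space, one can realize copies of all of them on a common probability space in such a way that the copies converge almost surely while each retains its original law. Two things must therefore be verified: that the carrier space $\mathcal{D}^k$ is Polish, and that the assumed convergence is weak convergence of the laws on that space.

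First I would note that $\mathcal{D}^k=\mathcal{D}([0,\infty[,\mathbb{R}^k)$, endowed with the Skorohod $J_1$ topology, is a Polish space, i.e. it is separable and admits a complete metric \cite{Whittbook,BilConv}. It is the Skorohod topology, rather than the uniform one, that secures separability here. Because the sample paths of the jump processes $\bm X_n$ and of the limit $\bm Y$ are right-continuous with left limits, both are genuine random elements of $\mathcal{D}^k$, and the hypothesis that $\bm X_n$ converges weakly to $\bm Y$ is exactly weak convergence of the induced laws on this Polish space. Skorohod's theorem then furnishes a probability space $(\Omega,\mathcal{F},\bm P)$ and random elements $\widetilde{\bm X}_n,\widetilde{\bm Y}$ on it with $\bm X_n\stackrel{d}{=}\widetilde{\bm X}_n$, $\bm Y\stackrel{d}{=}\widetilde{\bm Y}$ and $\widetilde{\bm X}_n\to\widetilde{\bm Y}$ almost surely in the Skorohod topology, which is precisely the assertion.

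The argument is short, and its only delicate point, which I regard as the main obstacle, is compatibility of the notions of convergence. The almost sure convergence produced above is convergence in the Skorohod topology, whereas the first-passage-time Lemmas \ref{FPTas} and \ref{window1FPT} are stated and proved using locally uniform convergence of paths. I would close this gap by invoking the fact that Skorohod $J_1$ convergence to a \emph{continuous} limit coincides with uniform convergence on compact time intervals. Since the relevant limit $\widetilde{\bm Y}$ is continuous (a diffusion, or a smooth deterministic fluid limit), the almost sure Skorohod convergence automatically upgrades to almost sure locally uniform convergence, so that the output of this proposition can be fed directly into the FPT lemmas. Making this compatibility explicit is what justifies using the Skorohod representation as the bridge between the weak convergence of the processes and the pathwise arguments of Section \ref{Section5}.
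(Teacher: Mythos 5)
Your proof is correct and follows the same route as the paper: observe that $\mathcal{D}^k$ with the Skorohod topology is Polish and apply Skorohod's representation theorem. Your additional remark that $J_1$ convergence to a continuous limit upgrades to locally uniform convergence is a useful clarification the paper leaves implicit, but it does not change the argument.
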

\begin{proof}
From its definition, $\bm{X}_n$ belongs to $\mathcal{D}^{k}$, which is a Polish space with the Skorohod topology \citep{Lindvall}. Then, the proposition follows  applying the Skorohod's representation theorem \cite{Skorohod1}.
\end{proof}
\begin{proof}[Proof of Theorem \ref{conspike} (main result)]
Applying Theorem \ref{theo} and Proposition \ref{cor} in any time window between two consecutive passage times, there exist $\widetilde{\bm X}^*_n$ and $\widetilde{\bm Y}^*$ such that $\widetilde{\bm X}^*_n\overset{d}{=}\bm X^*_n, \widetilde{\bm Y}^*\overset{d}{=}\bm Y^*$ and $\widetilde{\bm X}^*_n \to \widetilde {\bm Y}^*$ a.s.. 
Define $\widetilde{\eta}_{j;n},\widetilde \tau_{j;n}$ from $\widetilde{\bm X}^*_n$ and 
$\widetilde{\eta}_{j},\widetilde \tau_{j}$ from $\widetilde{\bm Y}^*$ as done in Section \ref{Section3}. Assume $\widetilde\eta_m=j$ and thus $\widetilde\eta_{m;n}=j$ for $n$ sufficiently large, due to the strong convergence of the processes. If 
\begin{equation}\label{eqboh}
(\widetilde{\bm{\tau}}_n, \widetilde{\bm{X}}^*_n(\widetilde{\bm\tau}_n),\widetilde{\bm\eta}_n)\to
(\widetilde{\bm \tau}, \widetilde{\bm{Y}}^*(\widetilde{\bm\tau}),	\widetilde{\bm\eta}) \textrm { a.s. }
\end{equation}
holds, we would have
\begin{equation*}
 \widetilde{\tau}_{m;n}=T_{B_{j}}\left(\widetilde{X}^*_{j;n}\right) \overset{d}{=}T_{B_{j}}\left( X^*_{j;n}\right) =\tau_{m;n}, \quad  \widetilde{\tau}_m=T_{B_{j}}\left(\widetilde{Y}_{j}^*\right) \overset{d}{=}T_{B_{j}}\left( Y_{j}^*\right) =\tau_m,
\end{equation*}
since 
$\widetilde{\bm{X}}^*_{n}\overset{d}{=}\bm{X}^*_{n}$ and $\widetilde{\bm{Y}}^*\overset{d}{=} \bm{Y}^*$, which would also imply $\widetilde{\bm{X}}^*_n(\widetilde{\tau}_{m;n}) \overset{d}{=}\bm{X}^*_n(\tau_{m;n}) $ and $\widetilde{\bm{Y}}^*(\widetilde{\tau}_{m}) \overset{d}{=}\bm{Y}^*(\tau_{m}) $, for any $1\leq m\leq l$ and $l\in \mathbb{N}$, and thus the theorem. To prove \eqref{eqboh}, we proceed recursively in each time window:
\begin{enumerate}
\item[Step $m=1$.]  By definition, $\widetilde{\bm Y}^*$  behaves like a multivariate diffusion $\bm Y$ in $[0,\widetilde \tau_1[$. Since each one-dimensional diffusion component $\widetilde{Y}_j$ crosses the level $B_j$ infinitely often immediately after $\widetilde T_B(\widetilde Y_j)$, it follows $T_{B_j}(\widetilde Y_j)=\widetilde T_{B_j}(\widetilde Y_j)$,  for $1\leq j \leq k$ and thus $\widetilde{\bm Y^*} \in H^k$. Since also $\widetilde{\bm Y}^*(0)<\bm B$ by assumption, we can apply Lemma \ref{window1FPT} and obtain the convergence of the triplets \eqref{triplets} with not-reset firing components. This convergence also holds if we reset the firing components: assume $\widetilde\eta_1=j$ and then $\widetilde\eta_{1;n}=j$ for $n$ large enough. Then 
\begin{equation}\label{firingc}
\widetilde X^*_{j;n}(\widetilde\tau_{1;n})=r_{0;\widetilde\eta_{1;n}}=\widetilde Y^*_{j}(\widetilde\tau_1), 
\end{equation}
and thus $\widetilde{\bm X}^*_n (\widetilde\tau_{1;n}) \to\widetilde{\bm Y}^*(\widetilde\tau_{1}) $, implying \eqref{eqboh}.
\item[Step $m=2$.] On $[\widetilde\tau_{1;n},\widetilde\tau_{1;n}+\widetilde\tau_{2;n}[$, $\widetilde{\bm X}_n^*$ is obtained by conditionally independence from
$\widetilde{\bm X}_n^*$ on $[0,\widetilde\tau_{1;n}[$, with initial value $\widetilde{\bm x}_{0;n}=\widetilde{\bm X}^* (\tau_{1;n})$. Similarly, on $[\widetilde\tau_1,\widetilde\tau_1+\widetilde \tau_2[$, $\widetilde{\bm Y}^*$ is obtained by conditionally independence from $\widetilde{\bm Y}^*$ on $[\widetilde\tau_1,\widetilde\tau_1+\widetilde\tau_2[$, with initial value $\widetilde{\bm y}_0=\widetilde{\bm Y}^* (\widetilde\tau_1)$.  From step $m=1$, $\widetilde{\bm X}^*(\widetilde\tau_{1;n})\to \widetilde{\bm Y}^*(\widetilde\tau_1)$, and since $\widetilde{\bm Y}^*(\widetilde\tau_1)<\bm B$ and $\widetilde{\bm Y}^* \in H^k$, we can apply Lemma \ref{window1FPT}. Then, \eqref{eqboh} follows noting that \eqref{triplets} also holds if we reset the firing components $\widetilde\eta_{2;n}$ and $\widetilde\eta_{2}$, as done in \eqref{firingc}.
\item[Step $m>2$] It follows mimicking Step 2.
\end{enumerate}
\end{proof}

\section{Application to neural network modeling}\label{SectionX}
Membrane potential dynamics of neurons are determined by the arrival of excitatory and inhibitory postsynaptic potentials (PSPs) inputs that increase or decrease the membrane voltage. Different models account for different levels of complexity in the description of membrane potential dynamics. In LIF models, the membrane potential of a single neuron evolves according to a stochastic differential equation, with a drift term modeling the neuronal (deterministic) dynamics, e.g. input signals, spontaneous membrane decay, and the noise term accounting for  random dynamics of  incoming inputs. 

The first LIF model was proposed by Stein \cite{Ste65} to model the firing activity of single neurons which receive a very large number of inputs from separated sources, e.g. Purkinjie cells. The membrane potential evolution is given by \eqref{Stein} with $k=1$ when $X_n(t)$ is less than a firing threshold $B>x_{0;n}$, considered constant for simplicity. Each event of the excitatory process $N_n^+(t)$ depolarizes the membrane potential by $a_n>0$ and analogously the inhibition process $N_n^-(t)$ produces a hyperpolarization of size $b_n<0$. The values $a_n$ and $b_n$ represent the values of excitatory and inhibitory PSPs, respectively. Between events of input processes $N^+_n$ and $N^-_n$, $X_n$ decays exponentially to its resting potential $x_{0;n}$ with time constant $\theta$.  The firing mechanism was modeled as follows: a neuron releases a spike when its membrane potential attains the threshold value. Then the membrane potential is instantaneously reset to its starting value and the dynamics restarts. The intertime between two consecutive spikes, called interspike intervals (ISIs), are modeled as FPTs of the process through the boundary. Since the ISIs of the single neuron are independent and identically distributed, the underlying process is renewal. \\
In the following subsections we extend the one-dimensional  Stein model to the multivariate case to describe a neural network. We interpret all previous processes and theorems in the framework of neuroscience.

\subsection{Multivariate Stein model}
When $k>1$, \eqref{Stein} represents a multivariate generalization of the Stein model for the description of the sub-threshold membrane potential evolution of a network of $k$ neurons like Purkinjie cells. The synaptic inputs impinging on neuron $j$ are modeled by $N_{j;n}$, while $M_{A;n}$ models the synaptic inputs impinging on a cluster of neurons belonging to a set $A$. The presence of $M_{A;n}$ allows for simultaneous jumps for the corresponding set of neurons $A$ and determines a dependence between their membrane potential evolutions. We call this kind of structure \emph{cluster dynamics}  and we limit our paper to this type of dependence between neurons. Note that \eqref{Stein} might be rewritten in a more compact way, summing the Poisson processes with the same jump amplitudes. However, we prefer to distinguish between $N$ and $M$, to highlight their different role in determining the dependence structure. To  simplify the notation, we assume $\theta$ to be the same in all neurons. This is a common hypothesis since the resistance properties of the neuronal membrane are similar for different neurons \citep{TuckwellBook}. 
As for the univariate Stein, this proposed multivariate LIF model catches some physiological features of the neurons, namely the spontaneous decay of the membrane potential in absence of inputs and the effect of PSPs on the membrane potentials.
\
\subsection{Multivariate OU to model sub-threshold dynamics of neural network}
To make the multivariate Stein model mathematically tractable, we perform a diffusion limit. Theorem \ref{theo} guarantees that a multivariate OU process \eqref{OU} can be used to
 approximate a multivariate Stein when the frequency of PSPs increases and the contribution of the single postsynaptic potential becomes negligible with respect to the total input, i.e. for neural networks characterized by a large number of synapses. Being the diffusion limit of the multivariate Stein model, the OU  inherits both its biological meaning and  dependence structure. Indeed they have the same membrane time constant $\theta$, which is responsible for the exponential decay of the membrane potential. Moreover,  the terms $\mu_\cdot$ and $\sigma_\cdot$ of the OU are given by  \eqref{cond1} and \eqref{cond2} respectively, and thus they incorporate both frequencies and amplitudes of the jumps of the Poisson processes underlying the multivariate Stein model. 
Finally, if some neurons $j$ and $l$ belong to the same cluster $A$, their dynamics are related. This dependence is caught by the term $\sigma _{A}^{2}$ in the component $\psi_{jl}$ of the covariance matrix $\bm\psi$, which is not diagonal. This highlights the importance of having correlated noise in the model, and it represents a novelty in the framework of neural network models. Indeed, the dependence is commonly introduced in the drift term,  motivated by direct interactions between neurons, while the noise components are independent, see e.g. \cite{BBB,Brunel}. Here we ignore this last type of dependence to focus on cluster dynamics, but the proposed model can be further generalized introducing direct interactions between the $i$th and $j$th components, as noted in Remark \ref{Remark2.4}. 

\subsection{Firing neural network model and convergence of the spike trains}
In Section \ref{Section3}  we introduce the necessary mathematical tools to extend the single neuron firing mechanism to a network of $k$ neurons. Consider the sub-threshold membrane potential dynamics of a neural network described by a multivariate Stein model $\bm X_n$. A neuron $j$, $1\leq j\leq k$ releases a spike when the membrane potential attains its boundary level $B_j$. Whenever it fires, its membrane potential is instantaneously reset to its resting potential $r_{0j}<B_j$ and then its dynamics restart. Meanwhile, the other components are not reset but continue their evolutions. Since the inputs are modeled by stationary Poisson processes, the ISIs within each spike train are independent and identically distributed. Thus the single neuron firing mechanism holds for each component, which is described as a one-dimensional renewal Stein model. The firing neural network model is described by a multivariate process  behaving as the multivariate Stein process $\bm X_n$ in each time window between two consecutive passage times.  For this reason, we call this model, \emph{ multivariate firing Stein model} and we denote it $\bm X^*_n$. The ISIs of the components of the multivariate processes are neither independent nor identically distributed. 
We identify the spike epochs of the $j$th component of the Stein process, as the FPT of $X_{j,n}$ through the boundary $B_j$. The set of spike trains of all neurons corresponds to a multivariate point process with events given by the spikes. An alternative way of considering the simultaneously recorded spike trains is to overlap them and mark each spike with the component which generates it. Thus, we obtain the univariate point process $\bm\tau_n$ with marked events $\bm\eta_n$. The objects $\bm Y^*, \bm \tau $ and $\bm \eta$ are similarly defined for the multivariate OU process $\bm Y$, and we call $\bm Y^*$  \emph{multivariate firing OU process}.  Hence the models $\bm X^*_n$ and $\bm Y^*$ describe the membrane potential dynamics of a network of neurons with reset mechanism after a spike and thus are multivariate LIF models.

Finally, Theorem \ref{conspike} implies the convergence of the multivariate firing processes  $\bm X^*_n$ to $\bm Y^*$ and the convergence of the collection of marked spike train $(\bm\tau_n,\bm\eta_n)$ to $(\bm \tau,\bm\eta)$.  This guarantees that the neural code encoded in the FPTs is not lost in the diffusion limit. 

\subsection{Discussion}
As application of our mathematical findings, we developed a LIF model able to catch dependence features between spike trains in a neural networks characterized by large number of inputs from surrounding sources. \\
To make the model mathematically tractable, we introduced three assumptions: each neuron is identified with a point; Poisson inputs in \eqref{Stein} are independent; a firing neuron is instantaneously reset to its starting value. The first assumption characterizes univariate LIF models and has been recently assumed for two-compartmental neuronal model \cite{BendettoSacerdote}. We are aware that the Hodgkin-Huxley (HH) model and its variants are more biologically realistic than LIF. Indeed the HH model is a deterministic, macroscopic model describing the coupled evolution of the neural membrane potential and the averaged gating dynamics of Sodium and Potassium ion channels through a system of non-linear ordinary differential equations \cite{HH}. However, a mathematical relationship between Morris-Lecar model, i.e. a simplified version of HH model, and LIF models has been recently shown \cite{DitlevsenGreenwood}. This gives a (further) biological support to the use of LIF models and allows to avoid mathematical difficulties and  computationally expensive numerical implementations which are required for HH models.\\
The second assumption grounds on the description of the activity of each synapsis through a point process and it is also common to HH models, for which ion channels are modeled by independent Markov jump processes \cite{VB1991}. Physiological observations suggest that the behavior of each synapsis is weakly correlated with that of the others. Thanks to Palm-Kintchine Theorem, the overall neuron's input is described by two Poisson processes, one for the global inhibition and the other for the global excitation \cite{CapocelliRicciardi73}. \\
The third assumption has been introduced to simplify the notation, but it is not restrictive. Remark \ref{Remark4.1} guarantees
the convergence of the firing process and of the spike times in presence of delayed resets. Thus a refractory period can be introduced after each spike, increasing the biological realism of the model. Indeed after a spike, there is a time interval, called absolute refractory period, during which the spiking neuron cannot fire (while the others can), even in presence of strong stimulation \cite{TuckwellBook}.  

Having a multivariate LIF model for neural networks, several researches will be possible. First, one can simulate dependent spike trains from neural networks with known dependence structures. This allows to compare and test the reliability of  different existing statistical techniques for the detection of dependence structure between neurons, see e.g. \cite{STZ,gruen,Roman}.  Moreover, inspired by the techniques for the FPT problem of  univariate LIF models, one can develop analytical, numerical and statistical methods for the multivariate OU (or other diffusion processes) and its FPT problem, see e.g. \cite{STZ,STZsiam}. Furthermore, more biologically realistic LIF models for neural networks can be considered. Indeed Theorem \ref{conspike} can be applied to more general models such as Stein processes with direct interactions between neurons, Stein with reversal potential \cite{LanskyLanska} or birth and death processes with reversal potential \cite{GLNR}. 

Finally,  the application of our results  in the neuroscience framework is not limited to the case of LIF models. Thanks to Remark \ref{remark2}, Theorem \ref{conspike} can be applied to processes obtained through diffusion and fluid limits, i.e. both LIF and HH models. Since the HH model can be obtained as a  fluid limit \cite{PTW}, once a proper reset and firing mechanism is introduced, the convergence of the FPTs follows straightforwardly from our results. 

\section*{Acknowledgements} 
This paper is the natural extension of the researches started and encouraged by Professor Luigi Ricciardi on stochastic Leaky Integrate-and-Fire models for the description of firing activity of single neurons. This work is dedicated to his memory. \\
\\
The authors are grateful to Priscilla Greenwood for useful suggestions.
\\
\\
L.S. was supported by University of Torino (local grant 2012) and by project AMALFI -
Advanced Methodologies for the AnaLysis and management of the
Future Internet (Universit\`{a} di Torino/Compagnia di San
Paolo). The work is part of the Dynamical Systems Interdisciplinary Network, University of Copenhagen. 

\appendix
\section{Proofs of Section 2}\label{AppA}
To prove Lemma \ref{lemma1}, we first need to provide the characteristic triplet of $\bm{Z}_n$, as suggested in \cite{Jacod}.  The characteristic function of $\bm{Z}_n(t)$, is:
\begin{equation}\label{char}
\phi_{\bm{Z}_n(t)}(\bm{u})=\mathbb{E}\left[i\exp\left\{\sum_{j=1}^k u_j Z_{j;n}(t)\right\}\right],
\end{equation}
where $\bm{u}=(u_1,\ldots,u_k)\in \mathbb{R}^k$. We can write:
\begin{eqnarray}
\nonumber \sum_{j=1}^{k}u_{j}Z_{j;n}(t) &=&\sum_{j=1}^{k}u_{j}\left[ -\Gamma
_{j;n}t+\left( a_{n}N_{j;n}^{+}(t)+b_{n}N_{j;n}^{-}(t)\right) %
\right]   \\
&+&\sum_{A\in \mathcal{A}}G_{A}\left(
a_{n}M_{A;n}^{+}(t)+b_{n}M_{A;n}^{-}(t)\right),   \label{UZ} 
\end{eqnarray}
where $G_{A}=\sum_{j\in A}u_{j}$. Plugging \eqref{UZ} in \eqref{char} and since the processes in \eqref{UZ} are independent and Poisson distributed for each $n$, we get the characteristic function
\[
\phi_{\bm{Z}_n(t)}(\bm{u})=\exp\{t\rho_n(\bm{u})\},
\]
where%
\begin{eqnarray*}
\rho _{n}\left( \bm{u}\right)  &=&-i\sum_{j=1}^{k}u_{j}\Gamma
_{j;n}+\sum_{j=1}^{k}\alpha _{j;n}\left( e^{iu_{j}a_{n}}-1\right)
+\sum_{j=1}^{k}\beta _{j;n}\left( e^{iu_{j}b_{n}}-1\right)  \\
&&+\sum_{A\in \mathcal{A}}\lambda _{A;n}\left( e^{iG_{A}a_{n}}-1\right)
+\sum_{A\in \mathcal{A}}\omega _{A;n}\left( e^{iG_{A}b_{n}}-1\right) .
\end{eqnarray*}%
In \cite{Jacod}, convergence results are proved for $\rho_n(\bm{u})$ given by %
\[
\rho _{n}\left( \bm{u}\right) =i\bm{u\cdot b}_{n}-\frac{1}{2}\bm{%
u\cdot c}_{n}\bm{\cdot u+}\int_{\mathbb{R}^{k}\backslash 0}\left( e^{i%
\bm{u\cdot x}}-1-i\bm{u\cdot h}\left( \bm{x}\right) \right)
\,\nu _{n}\left( d\bm{x}\right) ,
\]%
(see Corollary II.4.19 in \cite{Jacod}), where $\bm{u\cdot v}=\sum_{j=1}^{k}u_{j}v_{j}$ and  $\bm{u\cdot
d\cdot v}=\sum_{j,l=1}^{k}u_{j}d_{jl}v_{l}$. The vector $\bm b_n$, the matrix $\bm c_n$ and the L\'{e}vy measure $\nu _{n}$ are known as characteristic triplet of the process. 
Here $\bm{h}:\mathbb{R}^{k}\rightarrow 
\mathbb{R}^{k}$ is an  arbitrary truncation function that is the same for all $n$, is bounded with compact support and satisfies $\bm{h}\left( \bm{x}\right) =%
\bm{x}$ in a neighborhood of $\bm{0}$. In our case, the triplet is 
\begin{enumerate}
\item $\nu _{n}$: finite measure concentrated on finitely many points,%
\begin{eqnarray*}
\nu _{n}\left( \left\{ \bm{x}:x_{j}=a_{n}\right\} \right) 
&=&\alpha _{j;n},\quad \left( 1\leq j\leq k,\neq 0\right);  \\
\nu _{n}\left( \left\{ \bm{x}:x_{j}=b_{n}\right\} \right) 
&=&\beta _{j;n},\quad \left( 1\leq j\leq k,\neq 0\right) ; \\
\nu _{n}\left( \left\{ \bm{x}:x_{j}=a_{n}\right\} \text{ for }j\in A\right)  &=&\lambda _{A;n},\quad \left(
A\in \mathcal{A},\neq 0\right) ; \\
\nu _{n}\left( \left\{ \bm{x}:x_{j}=b_{n}\right\} \text{ for }j\in A\right)  &=&\omega _{A;n},\quad \left( A\in 
\mathcal{A},\neq 0\right) .
\end{eqnarray*}%
All the non-specified $x_{j}$ are set to $0$, i.e. $
\left\{ \bm{x}:x_{j}=a_{n}\right\} =\left\{ \bm{x}
:x_{j}=a_{n}, \right.$ $\left. x_{l}=0\text{ for }l\neq j\right\} $. 
Since $a_{n}\rightarrow 0$ and $b_{n}\rightarrow 0$ when  $n$ is
sufficiently large, $\nu _{n}$ is concentrated on a finite subset of the
neighborhood of $\bm{0}$, where $\bm{h}\left( \bm{x}\right) =%
\bm{x}$. Without loss of generality, we may therefore, and shall, assume that $\bm{h}\left( \bm{x}\right) =\bm{x}$.
\item $\bm{c}_{n}=\bm{0}$.
\item $\bm{b}_{n}=-\bm{\Gamma }_{n}+\int \bm{h}\left( \bm{x}%
\right) \,\nu _{n}\left( d\bm{x}\right) $=0. Indeed, using $\bm{h}%
\left( \bm{x}\right) =\bm{x}$, we have
\begin{equation*}
b_{j;n} =-\Gamma _{j;n}+\left( \alpha _{j;n}a_{n}+\beta
_{j;n}b_{n}\right)+\sum_{A\in \mathcal{A}}\mathbbm{1}_{\{j\in A\}}\left( \lambda _{A;n}a_{n}+\omega _{A;n}b_{n}\right)=0.
\end{equation*}
\end{enumerate}
Having provided the triplet $(\bm{b}_n,\bm{c}_n,\nu_n)$, we can prove Lemma \ref{lemma1} as follows
\begin{proof}[Proof of Lemma \ref{lemma1}]
Use Theorem VII.3.4 in \cite{Jacod}. In our case, the weak convergence of $\bm{Z}_{n}$ to $\bm{W}$ follows if 
\begin{itemize}
\item [i.] $\bm{b}_{n}\rightarrow 
\bm{0}$; 
\item [ii.] $\widetilde{c}_{jl;n}:=\int x_{j}x_{l}\,\nu _{n}\left( d\bm{x}\right)  
\rightarrow \psi _{jl}$ for $1\leq j,l\leq k$; 
\item [iii.]$\int g\,d\nu _{n}\rightarrow 0$ for all $g\in
C_{1}\left( \mathbb{R}^{k}\right) $;
\item [iv.] $\bm{B}^n_t=t\bm b_n$ and $\bm{\tilde{C}}^n_t = t\widetilde{\bm c}_n$ converge uniformly to $\bm{B}_t$ and $\bm{\tilde{C}}_t$ respectively, on any compact interval $[0,t]$.
\end{itemize}
Here $C_{1}\left( \mathbb{R}^{k}\right) $ is defined in VII.2.7 in \cite{Jacod}. Since $\bm B_{t}^{n}=t\bm{b}_{n}$, the uniform convergence is evident. Furthermore,  $\widetilde{\bm C}_{t}^{n}=t\widetilde{\bm{c}}_{n}$ converges uniformly provided that condition [ii] holds. To prove [ii], we rewrite $\widetilde{c}_{jl;n}$ as follows
\begin{eqnarray}
\nonumber\widetilde{c}_{jl;n} &=&\sum_{i=1}^{k}\left( \mathbbm{1}_{\{i=l=j\}} \alpha _{j;n}a_{n}^{2}+\mathbbm{1}_{\{i=l=j\}}\beta
_{j;n}b_{n}^{2}\right)   +\sum_{A\in \mathcal{A}}\mathbbm{1}_{\{j,l\in A\}}\left( \lambda _{A;n}a_{n}^{2}+\omega _{A;n}b_{n}^{2}\right) 
\nonumber\\
\label{CONV}&=&\mathbbm{1}_{\{j=l\}}\sigma_{j;n}^2+\sum_{A\in \mathcal{A}}\mathbbm{1}_{\{j,l \in A\}}\sigma^2_{A;n}.
\end{eqnarray}%
Then, $\widetilde{c}_{jl;n} \to \psi_{jl}$ follows from the convergence assumptions $(\ref{cond3}), (\ref{cond4}), (\ref{cond1}), (\ref{cond2})$. \newline
Using Theorem VII.2.8 in \cite{Jacod}, we may show [iv] considering $g\in C_{3}\left( \mathbb{R}^{k}\right) $, i.e.
the space of bounded and continuous function $g:\mathbb{R}^{k}\rightarrow \mathbb{R}$
such that $g(\bm{x})=o\left( \left\vert \bm{x}\right\vert
^{2}\right) $ as $\bm{x}\rightarrow 0$. Here, $\left\vert \bm{x}%
\right\vert $ is the Euclidean norm. For $g\in C_{3}\left( \mathbb{R}^{k}\right) $
and $\varepsilon >0$, we have $\left\vert g(\bm{x})\right\vert \leq
\varepsilon \left\vert \bm{x}\right\vert ^{2}$ for $\left\vert \bm{x}\right\vert $ sufficiently small. Then
\[
\left\vert \int g\,d\nu _{n}\right\vert \leq \varepsilon \int \left\vert 
\bm{x}\right\vert ^{2}\,d\nu _{n}\rightarrow \varepsilon
\sum_{i=1}^{k}\psi _{ii}
\]%
by (\ref{CONV}), and $\int g\,d\nu _{n}\rightarrow 0$ follows. Indeed, since $\bm{W}$ is continuous, the L\'{e}vy measure $\nu $ for $\bm{W}$ is the null measure. 
\end{proof}
\begin{proof}[Proof of Theorem 1]
The $j$th component of $\bm{X}_{n}$ can be rewritten in terms of the $j$th component of $\bm{Z}_n$ as 
\begin{equation}\label{eq11}
X_{j; n}(t)=x_{0j;n}+ \int_0^t \left[-\frac{X_{j; n}(s)}{\theta}+\Gamma_{j; n} \right]ds+Z_{j; n}(t), \qquad 1\leq j\leq k.
\end{equation}
Solving it, we get
\begin{equation*} 
X_{j; n}(t)=x_{0j;n}e^{-\frac{t}{\theta}}+Z_{j; n}(t)-\frac{1}{\theta}\int_0^t e^{-(t-s)/\theta}Z_{j; n}(s)ds, \qquad 1\leq j\leq k.
\end{equation*}
Hence, $\bm{X}_n$ is a continuous functional of both $\bm x_{0;n}$ and $\bm{Z}_n$. Therefore, due to the continuous mapping theorem, the weak convergence of $\bm x_{0;n}$ (for hypothesis) and $\bm{Z}_n$ (from Lemma \ref{lemma1}) implies the weak convergence of $\bm{X}_n$. Moreover, \eqref{eq11} guarantees that the limit process of $\bm{X}_n$ is that defined by \eqref{OU}. 
\end{proof}

\section*{References}

\end{document}